\newtheorem{theorem}{Theorem}%[section]
\newtheorem{lemma}[theorem]{Lemma}
\newtheorem{conjecture}[theorem]{Conjecture}
\newtheorem{question}[theorem]{Question}
\newcommand{\Aut}{{\rm Aut}}
\title{On $12$-regular nut graphs}
\author
{Nino Ba\v si\'c\footnote{FAMNIT  \& IAM, University of Primorska, 6000 Koper, Slovenia, and Institute of Mathematics, Physics 
and Mechanics, 1000 Ljubljana, Slovenia, email: {\tt nino.basic@famnit.upr.si}.}
\and
Martin Knor\footnote{Department of Mathematics, 
  Faculty of Civil Engineering, Slovak University of Technology in Bratislava, 
  Radlinsk{\'e}ho 11, 810 05, Bratislava, Slovakia, email: {\tt knor@math.sk}.} 
\and
Riste~{\v S}krekovski\footnote{Faculty of Information Studies, 8000 Novo mesto, Slovenia, 
  and FMF, University of Ljubljana, 1000 Ljubljana, Slovenia, and FAMNIT, University of Primorska, 6000 Koper, Slovenia,
  email: {\tt skrekovski@gmail.com}.}}
\begin{document}
\maketitle

{\abstract{
A nut graph is a simple graph whose adjacency matrix is singular with
$1$-dimensional kernel such that the corresponding eigenvector has no zero entries.
In 2020, Fowler \emph{et al.}\ characterised for each $d \in \{3,4,\ldots,11\}$  
all values $n$ such that there exists a $d$-regular nut graph of order $n$.
In the present paper, we determine all values $n$ for which a $12$-regular nut
graph of order $n$ exists.
We also present a result by which there are infinitely many circulant nut
graphs of degree $d \equiv 0 \pmod 4$ and no circulant nut graph of degree
$d \equiv 2 \pmod 4$.
}}

\bigskip
{\noindent \small \textbf{Keywords:} Nut graph, adjacency matrix, singular matrix, core graph, Fowler construction, regular graph.

\vspace{0.5\baselineskip}
{\noindent \small \textbf{Math.\ Subj.\ Class.\ (2020):} 05C50, 15A18. 
}

%
%------------------------------- Introduction --------------------------------------
%

\section{Introduction}

Let $G$ be a simple graph with the vertex set $V(G) = \{0,1,\dots,n{-}1\}$.
Its adjacency matrix $\mathbb A$ is a symmetric $n\times n$ matrix with entries
$a_{i,j}$, where $0\le i,j\le n{-}1$, such that $a_{i,j} = a_{j,i} =1$ if $\{i,j\}$ is an edge of $G$ and $a_{i,j} = a_{j,i} =0$ 
otherwise. Graph $G$ is a {\em nut graph} if $\mathbb A$ has eigenvalue $0$, the
eigenspace corresponding to the eigenvalue $0$ is $1$-dimensional and generated by an eigenvector 
which does not contain a $0$ entry. Observe that if the eigenspace corresponding to $0$ is more than
$1$-dimensional, then there exists an eigenvector containing entry $0$ that is different from
$\bold 0=(0,0,\dots,0)^T$. For an introductory treatment of spectral graph theory, which links graphs
to linear algebra, see e.g.~\cite{BH2012,C1997,CDS1995}.

Nut graphs have been studied in \cite{CFG,FGGPS,GPS,GutmanSciriha-MaxSing,
Sc1998,Sc2007,S,SciCommEigDck09,ScMaxCorSzSing09,SG}, 
see also the webpage \url{https://hog.grinvin.org/Nuts} within the
House of Graphs \cite{HG,nutgen}.
Recently, this concept
was extended to signed graphs \cite{BFPS}.
Nut graphs have chemical applications, see e.g.\ \cite{FGGPS,FPB2020,SF2007}. 
However, in the present paper we consider $12$-regular graphs, so our motivation is purely
mathematical.

In \cite{SG}, Gutman and Sciriha showed that the smallest non-trivial nut graph has order $7$.
In \cite{FPTBS}, Fowler \emph{et al.}\  determined all nut graphs on up to $10$ vertices and
all chemical nut graphs on up to $16$ vertices.
The smallest order for which a regular nut graph exists is $8$; see also
\cite{FGGPS}.
In \cite{FGGPS}, Fowler \emph{et al.}\ presented the following question.

\begin{question}
\label{que:regular}
Is it true that for each $d\ge 3$ there are only finitely many numbers
$n$ such that there does not exist a $d$-regular nut graph of order $n$?
\end{question}

In the attempt to answer Question~{\ref{que:regular}}, the `Fowler
Construction' played an important role; see also \cite{GPS}.
This construction implies the following theorem.

\begin{theorem}
\label{thm:construction}
Let $G$ be a nut graph on $n$ vertices and let $u$ be a vertex of $G$ of degree $d$.
Then there exists a nut graph of order $n+2d$ that is obtained from $G$ by adding $2d$ new vertices
and rearranging the edges in a certain way.
In the newly obtained nut graph the degrees of the new vertices are $d$ and the degrees of the
original vertices are not changed.
\end{theorem}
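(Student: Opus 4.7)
Let $\mathbf{x}=(x_v)_{v\in V(G)}$ denote the (nonvanishing) kernel eigenvector of $\mathbb{A}(G)$ and write $v_1,\ldots,v_d$ for the neighbours of $u$. The plan is first to exhibit a local surgery at $u$ that adds $2d$ vertices without changing the degree of any vertex of $G$, then to write down an explicit kernel vector of the new graph extending $\mathbf{x}$, and finally to argue that this kernel is one-dimensional. The whole argument is driven by the single identity
\[
  x_{v_1}+x_{v_2}+\cdots+x_{v_d}=0,
\]
which is the $G$-kernel equation at $u$.

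For the surgery I would introduce $2d$ new vertices $a_1,\ldots,a_d,b_1,\ldots,b_d$, delete the $d$ edges $uv_i$, and insert the edges $ua_i$, $v_ib_i$, and all pairs $a_ib_j$ with $i\neq j$. A short bookkeeping check on incidences confirms that each original vertex keeps its $G$-degree and that each new vertex has degree $d$. Denote the resulting graph by $G'$; its order is $n+2d$.

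I would then propose the candidate kernel vector $\mathbf{y}$ for $\mathbb{A}(G')$ given by $y_v=x_v$ for $v\in V(G)\setminus\{u\}$, $y_u=-(d-1)x_u$, $y_{a_i}=x_{v_i}$, and $y_{b_i}=x_u$. Checking the eigenvalue equation of $\mathbb{A}(G')$ at every vertex is a direct computation: the equation at $u$ collapses to $\sum_i x_{v_i}=0$; at each $v_i$ one uses the original $G$-kernel equation to convert the missing $x_u$-term into $y_{b_i}=x_u$; and at each $a_i$ and each $b_i$ the cancellation is automatic from the definitions. All coordinates of $\mathbf{y}$ are nonzero whenever $d\ge 2$ and $\mathbf{x}$ has no zero entry.

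The main obstacle is showing that $\ker \mathbb{A}(G')$ is exactly one-dimensional. For an arbitrary $\mathbf{z}\in\ker\mathbb{A}(G')$ I would proceed as follows. The equations at the $a_i$'s force all $z_{b_i}$ to coincide with a single value $\beta$ tied to $z_u$ by $z_u=-(d-1)\beta$. The equations at the $b_i$'s, together with the equation $\sum_i z_{a_i}=0$ at $u$, yield $z_{a_i}=z_{v_i}$ for every $i$. Substituting these identities into the equations at the $v_i$'s turns the restriction $\mathbf{z}|_{V(G)}$ into a solution of the non-homogeneous system $\mathbb{A}(G)\mathbf{z}|_{V(G)}=-d\beta\cdot\mathbf{e}$, where $\mathbf{e}$ is the indicator vector of $N_G(u)$; this system is consistent precisely because $\sum_i x_{v_i}=0$, and the constraint $z_u=-(d-1)\beta$ eliminates the last remaining degree of freedom (the multiple of $\mathbf{x}$). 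Hence $\mathbf{z}$ is a scalar multiple of $\mathbf{y}$, proving that $G'$ is a nut graph with the required degree sequence.
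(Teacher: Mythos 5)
The paper does not actually prove this statement: it is quoted as a known consequence of the ``Fowler construction'' and attributed to \cite{GPS} (see also \cite{FGGPS}), so there is no in-paper argument to compare against. Your proposal is a correct, self-contained reconstruction of exactly that construction: deleting the star at $u$ and inserting the gadget in which $u$ is joined to $a_1,\ldots,a_d$, each $v_i$ to $b_i$, and the $a$'s to the $b$'s by $K_{d,d}$ minus a perfect matching is precisely $F(G,u)$, and your candidate kernel vector ($y_u=-(d-1)x_u$, $y_{a_i}=x_{v_i}$, $y_{b_i}=x_u$) is the standard one; I have checked that the kernel equations at $u$, $v_i$, $a_i$, $b_i$ and all untouched vertices do close up as you claim. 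The uniqueness argument is also sound: the equations at the $a_i$'s force $z_{b_1}=\cdots=z_{b_d}=\beta$ and $z_u=-(d-1)\beta$, the equation at $u$ combined with those at the $b_i$'s forces $z_{a_i}=z_{v_i}$, and the restriction to $V(G)$ solves $\mathbb A(G)\mathbf z|_{V(G)}=-d\beta\,\mathbf e$, whose solution set for fixed $\beta$ is a coset of $\mathrm{span}(\mathbf x)$. Two small points deserve to be made explicit. First, in the final step the constraint $z_u=-(d-1)\beta$ pins down the multiple of $\mathbf x$ only because $x_u\neq 0$; this is where the nut hypothesis (nowhere-zero kernel vector) is used a second time and should be said aloud. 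Second, the nonvanishing of $y_u$ needs $d\ge 2$, which you note; it is worth adding that $d\ge 2$ is automatic, since the kernel equation at a degree-one vertex of a nut graph would force a zero entry. With those two remarks added, the proof is complete.
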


Obviously, if $G$ is a $d$-regular graph of order $n$, then the new graph is
$d$-regular of order $n+2d$.
Hence, to positively answer the Question~{\ref{que:regular}} for specific degree
$d$, it suffices to find $d$-regular graphs for $2d$ consecutive orders.
In \cite{GPS} ($d=3,4$) and \cite{FGGPS} ($5 \le d \le 11$) the authors found
all pairs $(d,n)$, such that $d\le 11$ and there exists a $d$-regular nut
graph of order $n$.
In the present paper, we extend this result to  $d=12$.
We prove the following statement.

\begin{theorem}
\label{thm:main}
There exists a $12$-regular nut graph of order $n$ if and only if $n\ge 16$.
\end{theorem}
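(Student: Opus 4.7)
The proof splits the biconditional into two independent halves, and my plan for each is as follows.

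For the direction ``$n < 16 \Rightarrow$ no $12$-regular nut graph of order $n$'', any $12$-regular graph requires $n \ge 13$, so only $n \in \{13,14,15\}$ must be ruled out. For $n=13$ the only $12$-regular graph is $K_{13}$, whose spectrum $\{12,(-1)^{12}\}$ does not contain $0$. For $n=14$ any $12$-regular graph is the complement of a $1$-regular graph, i.e.\ of the perfect matching $7K_2$; using the formula $A(\overline G) = J - I - A(G)$ together with the spectrum $\{1^7,(-1)^7\}$ of $7K_2$ gives the spectrum $\{12^1, 0^7, (-2)^6\}$ of $\overline{7K_2}$, so the nullity is $7$. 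For $n=15$ the complement is $2$-regular, hence a disjoint union of cycles whose lengths form a partition of $15$ with all parts at least $3$. There are $17$ such partitions; using the explicit cycle spectrum $\{2\cos(2\pi j/m)\}$ and the same complementation formula, each partition can be inspected, and in each case either $0$ fails to be an eigenvalue, or its multiplicity exceeds $1$, or the (up-to-scaling) unique kernel vector has a zero entry.

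For the direction ``$n \ge 16 \Rightarrow$ a $12$-regular nut graph of order $n$ exists'', I would invoke Theorem~\ref{thm:construction} with $d=12$: from any $12$-regular nut graph of order $n$ it produces a $12$-regular nut graph of order $n+24$, and inductively one of every order in $n + 24\mathbb{Z}_{\ge 0}$. It therefore suffices to exhibit a $12$-regular nut graph for each of the $24$ consecutive base orders $n \in \{16,17,\ldots,39\}$. The abstract already promises an infinite family of circulant nut graphs of degree $d \equiv 0 \pmod 4$, so several of the required base orders will be supplied by circulant constructions; the remaining base orders have to be handled by tailored constructions (small computer-certified examples, Fowler-type extensions of nut graphs of smaller order whose degrees can be built up, or graph operations such as complementation or composition applied to known nut graphs).

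The main obstacle is the exhaustive treatment of the $24$ base orders. It is unlikely that the circulant family alone hits every residue in $\{16,\ldots,39\}$, because a $12$-regular circulant of order $n$ on $\mathbb{Z}_n$ requires a symmetric connection set of size $12$, and the additional spectral conditions (nullity exactly one and kernel nowhere vanishing) typically constrain $n$ to specific congruence classes. The exceptional orders must therefore be covered by individually constructed examples whose nut property is verified directly, and this case-by-case verification—together with the clean statement and proof of the general circulant theorem advertised in the abstract—is where the bulk of the technical work lies.
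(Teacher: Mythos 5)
Your proposal follows essentially the same route as the paper: the negative part is settled by checking the (unique, unique, and seventeen) $12$-regular graphs on $13$, $14$ and $15$ vertices, and the positive part combines Theorem~\ref{thm:construction} with $d=12$ (reducing to the $24$ base orders $16,\dots,39$) with circulant nut graphs for the even orders and computer-found examples for the odd orders, exactly as the paper does. Your complement-spectrum computations for $n=13,14$ are a slightly more explicit version of the paper's remarks, and your anticipated case split between circulants and ad hoc examples matches the paper's actual treatment.
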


To prove the `positive part' of Theorem~{\ref{thm:main}}, it suffices to
find $12$-regular nut graphs of orders $n\in\{16,17,\dots,39\}$.
We present these graphs in the following section.
For odd orders there is not much to say; we did a computer search and thus we 
provide a
list 
of graphs that we found. However, for even orders we can say more.

A graph $G$ is called {\em vertex-transitive} if all vertices are equivalent under the action of the automorphism
group $\Aut(G)$. In other words, for each
pair of vertices $u, v \in V(G)$ there exist an automorphism $\alpha \in \Aut(G)$
such that $\alpha(u) = v$.
In \cite{FGGPS}, the following necessary condition for a
vertex-transitive nut graph was given.

\begin{theorem}
\label{thm:necessary_vt}
Let $G$ be a vertex-transitive nut graph of degree $d$ on $n$ vertices.
Then $n$ and $d$ satisfy the following conditions. Either
\begin{enumerate}
\item
$d\equiv 0\pmod 4$, $n\equiv 0\pmod 2$ and $n\ge d+4$, or
\item
$d\equiv 0\pmod 2$, $n\equiv 0\pmod 4$ and $n\ge d+6$.
\end{enumerate}
\end{theorem}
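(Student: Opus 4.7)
The overall plan is to leverage the interplay between the $1$-dimensional null space of $A(G)$ and the transitive action of $\Aut(G)$. Let $x$ span $\ker A$; by hypothesis $x$ has no zero entries. For any $\alpha \in \Aut(G)$ the permutation matrix $P_\alpha$ commutes with $A$, hence preserves $\ker A$, so $P_\alpha x = \varepsilon(\alpha)\, x$ for some scalar $\varepsilon(\alpha)$; since $P_\alpha$ is a permutation and $x \neq 0$, one has $\varepsilon(\alpha) \in \{+1,-1\}$. Transitivity of $\Aut(G)$ on $V(G)$ then forces the map $v \mapsto |x_v|$ to be constant, so after rescaling we may assume $x_v \in \{+1,-1\}$ for all $v$. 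This produces a vertex partition $V(G) = V^+ \cup V^-$.

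Two parity consequences are immediate. Evaluating $A x = 0$ at a vertex $v$ gives $\sum_{u \sim v} x_u = 0$, so $v$ has equally many neighbors in $V^+$ and in $V^-$, each count equal to $d/2$; thus $d$ is even. Next, vertex-transitivity supplies an automorphism $\alpha$ mapping some $u \in V^+$ to some $v \in V^-$, and the identity $x_u = \varepsilon(\alpha)\, x_v$ forces $\varepsilon(\alpha) = -1$, so $\alpha$ swaps $V^+$ and $V^-$ setwise. Hence $|V^+| = |V^-| = n/2$, and in particular $n$ is even.

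For the divisibility refinement I would double-count edges inside $V^+$: each of its $n/2$ vertices has $d/2$ neighbors in $V^+$, giving $e(V^+, V^+) = nd/8$. Integrality forces $8 \mid nd$, which is automatic when $d \equiv 0 \pmod 4$ but forces $n \equiv 0 \pmod 4$ when $d \equiv 2 \pmod 4$. For the lower bound, first note $n \geq d+1$, and since $n,d$ are both even, $n \geq d+2$. If $n = d+2$, then $\overline{G}$ is $1$-regular and hence a perfect matching, so $G$ is the cocktail-party graph $K_{(d+2)/2 \,\times\, 2}$, whose null space has dimension $(d+2)/2 \geq 2$, contradicting the nut condition. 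Therefore $n \geq d+4$, which settles case~1; combining $n \geq d+4$ with $4 \mid n$ when $d \equiv 2 \pmod 4$ yields $n \geq d+6$, giving case~2.

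The crux of the argument is the very first observation, that the $1$-dimensionality of $\ker A$ turns the permutation action of $\Aut(G)$ into a sign action on the nut eigenvector, thereby encoding vertex-transitivity as a rigid $\{\pm 1\}$-structure on $V(G)$. Once this balanced partition is in hand, the remaining ingredients---elementary parity checks, a double-count of edges inside $V^+$, and a one-line spectral identification of the unique $d$-regular graph on $d+2$ vertices---are essentially bookkeeping.
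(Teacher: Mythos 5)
The paper does not prove this statement itself; it is quoted as a known result from \cite{FGGPS}, so there is no in-paper proof to compare against. Your argument is correct and complete, and it follows essentially the same route as the proof in that reference: the $1$-dimensional kernel forces $\Aut(G)$ to act on the nut eigenvector by signs, transitivity makes all entries $\pm 1$ after rescaling, the local balance $\sum_{u\sim v}x_u=0$ gives $d$ even and the balanced bipartition $|V^+|=|V^-|=n/2$, the edge count $e(V^+)=nd/8$ yields the condition $4\mid n$ or $4\mid d$, and the identification of the unique $d$-regular graph on $d+2$ vertices as the cocktail-party graph (nullity $(d+2)/2\ge 2$) together with parity gives the lower bounds $n\ge d+4$ and $n\ge d+6$ in the respective cases.
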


The existence of vertex-transitive nut graphs is interesting on its own, see
\cite[Question~4]{FGGPS}.
For our research it is important that, by Theorem~{\ref{thm:necessary_vt}},
there may exist vertex-transitive
$12$-regular graphs of even orders $n\ge 16$.
We found such graphs among circulant graphs.

%
%------------------------------- Results --------------------------------------
%

\section{Results}

We start with the `negative part' of Theorem~{\ref{thm:main}}.
There is only one $12$-regular graph of order $13$, namely the complete
graph $K_{13}$, and it is not a nut graph.
The unique $12$-regular graph of order $14$ is obtained by removing a
matching from $K_{14}$, and again, this graph is not a nut graph.
Finally, there are $17$ graphs of order $15$ which are $12$-regular.
They are obtained by removing a $2$-factor from $K_{15}$. Using the
SageMath software \cite{SageMath} we analysed all such graphs and concluded 
that none of them is a nut graph.

Now we turn our attention to the `positive part' of Theorem~{\ref{thm:main}}.
We start with more general results for even orders.
The following lemma is in fact hidden in the text preceding Proposition~1 in
\cite{GPS}.
We decided to present it here in a slightly more general frame together with
its short proof.

\begin{lemma}
\label{lem:sum}
Let $G$ be a $d$-regular graph on $n$ vertices such that its adjacency
matrix $\mathbb A$ is singular. 
Then for every eigenvector $\bold c=(c_0,c_1,\dots,c_{n-1})^T$ corresponding
to eigenvalue $0$ we have 
\begin{equation*}
\sum_{i=0}^{n-1}c_i=0.
\end{equation*}
\end{lemma}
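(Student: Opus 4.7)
The plan is to exploit the well-known fact that in any $d$-regular graph the all-ones vector $\mathbf{j}=(1,1,\dots,1)^T$ is an eigenvector of the adjacency matrix $\mathbb{A}$, with eigenvalue $d$. This is immediate from $d$-regularity: every row of $\mathbb{A}$ has exactly $d$ ones, so $\mathbb{A}\mathbf{j}=d\,\mathbf{j}$.

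Granting this, I would compute the quantity $\mathbf{j}^T\mathbb{A}\mathbf{c}$ in two ways. On one hand, $\mathbb{A}\mathbf{c}=\mathbf{0}$ by assumption, so the product vanishes. On the other hand, using symmetry of $\mathbb{A}$,
\begin{equation*}
\mathbf{j}^T\mathbb{A}\mathbf{c}=(\mathbb{A}\mathbf{j})^T\mathbf{c}=d\,\mathbf{j}^T\mathbf{c}=d\sum_{i=0}^{n-1}c_i.
\end{equation*}
Equating the two expressions gives $d\sum_{i=0}^{n-1}c_i=0$, and since $d\neq 0$, the claimed identity follows. An entirely equivalent phrasing would invoke the spectral theorem directly: $\mathbb{A}$ is real symmetric, hence eigenvectors belonging to distinct eigenvalues are orthogonal; as $\mathbf{c}$ and $\mathbf{j}$ correspond to the distinct eigenvalues $0$ and $d$, we have $\mathbf{c}\perp\mathbf{j}$, which is the desired sum condition.

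There is no real obstacle here; the argument is a two-line calculation. The only point worth flagging is the tacit assumption $d\ge 1$, which is required so that $0$ and $d$ are distinct eigenvalues (if $d=0$, the graph is edgeless, $\mathbb{A}$ is the zero matrix, and the conclusion fails); this case is irrelevant to the nut-graph setting of the paper and can simply be excluded at the start of the proof.
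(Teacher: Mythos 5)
Your argument is correct and is essentially the paper's own proof in different notation: the paper sums the coordinates of $\mathbb{A}\mathbf{c}=\mathbf{0}$ and uses regularity to identify that sum with $d\sum_i c_i$, which is exactly your computation of $\mathbf{j}^T\mathbb{A}\mathbf{c}$ in two ways. Your closing remark about orthogonality to the Perron eigenvector and the need for $d\ge 1$ is a fine (and accurate) gloss, but it adds nothing beyond the same two-line calculation.
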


\begin{proof}
Let $\mathbb A=(\bold a_0,\bold a_1,\dots,\bold a_{n-1})^T$.
Then
$\mathbb A\bold c=(\bold a_0\bold c,\bold a_1\bold c,\dots,\bold a_{n-1}\bold c)^T= \bold 0$,
and thus $\sum_{i=0}^{n-1}\bold a_i\bold c=0$.
However, $\sum_{i=0}^{n-1}\bold a_i\bold c=\sum_{i=0}^{n-1}dc_i$,
and since $d>0$, we have $\sum_{i=0}^{n-1}c_i=0$.
\end{proof}

Let $V=\{0,1,\dots,n{-}1\}$ and let $1\le a_1<a_2<\dots<a_t\le\frac n2$.
By $C(n,\{a_1,a_2,\dots,a_t\})$ we denote a graph on the vertex set $V$ in which
two vertices $i,j\in V$ are adjacent if and only if $|i-j|=a_k$,  
where $1\le k\le t$.
The graph $C(n,\{a_1,a_2,\dots,a_t\})$ is called a {\em circulant graph} and
it is regular.
Its degree is $2t-1$ if $a_t=\frac n2$ and $2t$ otherwise.
In fact, circulant graphs are vertex-transitive since $\varphi\colon i\to i+1$
is an automorphism of $C(n,\{a_1,a_2,\dots,a_t\})$ (the addition is modulo
$n$).

Circulant graphs are easy to describe and easy to handle.
Therefore, it would be nice if there were many nut graphs among them.
We prove one positive and one negative result about circulant graphs.
We start with the following lemma.

\begin{lemma}
\label{lem:eigenvector}
Let $G=C(n,\{a_1,a_2,\dots,a_t\})$ be a circulant nut graph, and let
$\mathbb A$ be its adjacency matrix.
Then $(1,-1,1,-1,\dots)$ is an eigenvector corresponding to eigenvalue $0$.
\end{lemma}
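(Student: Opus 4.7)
My plan is to exploit the cyclic symmetry of the circulant graph $G$. Let $\varphi\colon i\mapsto i+1 \pmod n$ be the shift, which is an automorphism of any circulant graph, and let $P$ denote the associated $n\times n$ permutation matrix. Because $\varphi$ preserves adjacency, $P$ commutes with $\mathbb{A}$, so $P$ sends $\ker\mathbb{A}$ into itself. This is the only structural input I need; everything else is then forced by the definition of a nut graph.

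The steps I would carry out are as follows. First, since $G$ is a nut graph and $\mathbb{A}$ is real symmetric, $\ker\mathbb{A}$ is one-dimensional and spanned by a real vector $\bold c=(c_0,c_1,\dots,c_{n-1})^T$ with no zero entries. Invariance of this line under $P$ yields $P\bold c=\lambda\bold c$ for some scalar $\lambda$; because $\bold c$ is real, $\lambda$ is real. Second, $P$ is the standard cyclic shift and its spectrum is the set of $n$-th roots of unity, so the only real possibilities are $\lambda=1$ and $\lambda=-1$. Third, I rule out $\lambda=1$: it would force $c_0=c_1=\cdots=c_{n-1}$, and then Lemma~\ref{lem:sum} gives $nc_0=0$, so $\bold c=\bold 0$, contradicting the nut-graph hypothesis. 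Hence $\lambda=-1$, which means $c_{i+1}=-c_i$ for all $i$ modulo $n$; this both forces $n$ to be even (otherwise $c_0=-c_0$) and gives $\bold c=c_0(1,-1,1,-1,\dots,1,-1)^T$, as required.

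I do not expect a genuine obstacle, but the step deserving some care is the reality of $\lambda$: one needs to observe that the nut-graph condition gives a \emph{real} generator of $\ker\mathbb{A}$, ruling out the complex eigenvalues of $P$, since otherwise a single complex eigenvalue of $P$ could in principle have appeared. The incidental conclusion that a circulant nut graph must have even order is a useful by-product that seems to feed directly into the announced non-existence result for circulant nut graphs of degree $d\equiv 2\pmod 4$.
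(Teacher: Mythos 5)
Your proof is correct, but it takes a genuinely different route from the paper's. You use the cyclic shift $\varphi\colon i\mapsto i+1$: since the shift matrix $P$ commutes with $\mathbb A$ and the kernel is a line spanned by a real nowhere-zero vector $\bold c$, you get $P\bold c=\lambda\bold c$ with $\lambda$ a real $n$-th root of unity, rule out $\lambda=1$ via Lemma~\ref{lem:sum}, and read off everything from $\lambda=-1$. The paper instead uses the reflection $\varphi\colon i\mapsto 2-i$: it forms the kernel vector $\bold c$ with $c_{2-i}=-b_i$, observes that $\bold b+\bold c$ is a kernel vector with a zero entry (hence $\bold 0$, since $G$ is a nut graph), and iterates to get $\bold b=(p,q,p,q,\dots)$; it then imports the evenness of $n$ from Theorem~\ref{thm:necessary_vt} and applies Lemma~\ref{lem:sum} to conclude $q=-p$. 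The main thing your argument buys is self-containedness: the evenness of $n$ (and hence the nonexistence of odd-order circulant nut graphs) falls out as a by-product of $\lambda=-1$, rather than being borrowed from the external characterisation of vertex-transitive nut graphs, and the ``repeating the process'' step of the paper's proof is replaced by a single clean eigenvalue computation. The paper's reflection trick, on the other hand, leans more directly on the ``no zero entries'' property and avoids any discussion of which eigenvalue of $P$ can occur. Both arguments ultimately rest on the same two pillars (one-dimensionality of the kernel plus Lemma~\ref{lem:sum}), so either could replace the other without affecting the rest of the paper.
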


\begin{proof}
We use the well-known fact that if $\bold b$ and $\bold c$ are eigenvectors
corresponding to eigenvalue $\lambda$, then $\bold b+\bold c$ is also an
eigenvector corresponding to eigenvalue $\lambda$.

Let $\bold b=(b_0,b_1,\dots,b_{n-1})^T$ be an eigenvector corresponding to
$0$.
Denote $b_0=p$ and $b_1=q$.
Since $\varphi\colon i\to 2-i$ is an automorphism of $G$ (the addition being
modulo $n$), there is an eigenvector $\bold c=(c_0,c_1,\dots,c_{n-1})^T$
such that $c_{2-i}=-b_i$, $0\le i\le n-1$.
Then $c_1=-b_1=-q$ and $c_2=-b_0=-p$.
Since $b_1+c_1=0$ and $\bold b+\bold c$ is an eigenvector, we must have
$\bold b+\bold c=\bold 0$ because $G$ is a nut graph.
Hence, $b_2+c_2=0$ and therefore $b_2=p$.
Now repeating the process we get $\bold b=(p,q,p,q,\dots)$.
Observe that $n$ is even by Theorem~{\ref{thm:necessary_vt}}.
Thus, by Lemma~{\ref{lem:sum}}, we have $q=-p$ and so $(1,-1,1,-1,\dots)$ is
an eigenvector corresponding to eigenvalue $0$.
\end{proof}

Our negative result covers all circulant graphs of degree
$d\equiv 2\pmod 4$.

\begin{theorem}
\label{thm:d=2m4}
There is no circulant nut graph of degree $d$ if $d\equiv 2\pmod 4$.
\end{theorem}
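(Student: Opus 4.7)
The plan is to argue by contradiction using Lemma~\ref{lem:eigenvector}. Suppose $G=C(n,\{a_1,\ldots,a_t\})$ is a circulant nut graph of degree $d\equiv 2\pmod 4$. Because $d$ is even, we must have $a_t<\frac{n}{2}$ (the case $a_t=\frac{n}{2}$ would give the odd degree $2t-1$), and consequently $d=2t$. Since $d\equiv 2\pmod 4$, the integer $t$ is odd. I would record these two facts at the start: $t$ is odd and every neighbour relation $\{i,j\}$ comes as a genuine pair $\{i\pm a_k\}$ with $a_k<\frac{n}{2}$.

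By Lemma~\ref{lem:eigenvector}, the alternating vector $\mathbf{c}=(1,-1,1,-1,\ldots)^T$, i.e.\ $c_i=(-1)^i$, lies in the kernel of the adjacency matrix $\mathbb A$. I would then compute the $0$-th coordinate of $\mathbb A\mathbf{c}$, since by circulant symmetry computing one entry suffices. The neighbours of vertex $0$ are precisely the residues $\pm a_k \pmod n$ for $k=1,\ldots,t$, so
\begin{equation*}
0=(\mathbb A\mathbf{c})_0=\sum_{k=1}^{t}\bigl((-1)^{a_k}+(-1)^{-a_k}\bigr)=2\sum_{k=1}^{t}(-1)^{a_k}.
\end{equation*}
Hence $\sum_{k=1}^{t}(-1)^{a_k}=0$, which means the number of odd $a_k$'s equals the number of even $a_k$'s. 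In particular, $t$ must be even.

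This contradicts the earlier observation that $t$ is odd, completing the proof. I do not foresee a serious obstacle; the whole argument is essentially a one-line computation once Lemma~\ref{lem:eigenvector} is available, and the only delicate point is the bookkeeping around the edge case $a_t=\frac{n}{2}$ (where each such jump contributes only once to the neighbour sum), which is immediately dispatched by a parity check on the degree.
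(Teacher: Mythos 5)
Your proof is correct and follows essentially the same route as the paper: both invoke Lemma~\ref{lem:eigenvector} to place the alternating vector in the kernel, evaluate the row of $\mathbb A$ at vertex $0$ to get $2\sum_{k=1}^{t}(-1)^{a_k}=0$, and derive a contradiction from the parity of $t$. Your explicit dispatch of the edge case $a_t=\tfrac{n}{2}$ is a small point the paper leaves implicit, but the argument is the same.
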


\begin{proof}
Let $d\equiv 2\pmod 4$.
Denote $t=\frac d2$.
Observe that $t$ is an odd number.
By way of contradiction, assume that $G=C(n,\{a_1,a_2,\dots,a_t\})$ is a
circulant nut graph.
Then $n$ is even by Theorem~{\ref{thm:necessary_vt}}.
Let $\mathbb A=(\bold a_0,\bold a_1,\dots,\bold a_{n-1})^T$ be the adjacency
matrix of $G$.
By Lemma~{\ref{lem:eigenvector}}, $\bold c=(1,-1,1,-1,\dots)^T$ is an
eigenvector corresponding to eigenvalue $0$, so that
$\mathbb A\bold c=\bold 0$, and in particular $\bold a_0\bold c=0$.
However,
\begin{equation*}
\bold a_0\bold c=c_{a_1}+c_{a_2}+\dots+c_{a_t}+c_{n-a_1}+c_{n-a_2}+\dots+c_{n-a_t}.
\end{equation*}
Since $c_{a_i}=c_{n-a_i}$ for every $i$, $1\le i\le t$ (observe that the
difference between indices $a_i$ and $n-a_i$ is even), we have $\bold
a_0\bold c=2(c_{a_1}+c_{a_2}+\dots+c_{a_t})$, which implies that
$c_{a_1}+c_{a_2}+\dots+c_{a_t}=0$.
However, sum of odd number of odd numbers cannot be an even number, a
contradiction.
\end{proof}

Now we prove the positive result.

\begin{theorem}
\label{thm:d=0m4}
Let $d\equiv 0\pmod 4$ and let $n$ be even.
Then $C(n,\{1,2,\dots,\frac {d}{2}\})$ is a nut graph if and only if $\frac {d}{2}+1$
is coprime to $n$ and $\frac {d}{4}$ is coprime to $\frac n2$.
\end{theorem}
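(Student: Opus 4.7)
The plan is to compute the full spectrum of $C(n,\{1,2,\dots,d/2\})$ using the standard formula for the eigenvalues of a circulant matrix, and then to read off precisely when $0$ appears with multiplicity exactly one. Once this is done, the vector $(1,-1,1,-1,\dots)^{T}$, which corresponds in the DFT basis to the index $j=n/2$, is a real eigenvector with no zero entries (consistent with Lemma~\ref{lem:eigenvector}), so the nullity-one condition automatically yields a nut graph.

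Writing $\omega=e^{2\pi i/n}$ and $\zeta=\omega^{j}$, the eigenvalues are $\lambda_{j}=\sum_{k=1}^{d/2}(\zeta^{k}+\zeta^{-k})$. For $\zeta\ne 1$ the geometric series yields
\begin{equation*}
\lambda_{j}+1 \;=\; \zeta^{-d/2}\,\frac{\zeta^{d+1}-1}{\zeta-1}.
\end{equation*}
Clearing denominators in $\lambda_{j}=0$ and multiplying through by $\zeta^{d/2}$ produces the factorisation $(\zeta^{d/2}-1)(\zeta^{d/2+1}+1)=0$. Thus $\lambda_{j}=0$ splits into Case~A: $\zeta^{d/2}=1$, equivalent to $(d/2)\,j\equiv 0\pmod n$, and Case~B: $\zeta^{d/2+1}=-1$, equivalent to $(d/2+1)\,j\equiv n/2\pmod n$.

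An easy check, using $d\equiv 0\pmod 4$, shows that $j=n/2$ satisfies both congruences; conversely any $j$ satisfying both yields $\zeta=\zeta^{d/2+1}/\zeta^{d/2}=-1$, forcing $j=n/2$. In particular Case~B is consistent, so it has exactly $\gcd(d/2+1,n)$ solutions in $\{0,1,\dots,n-1\}$, while Case~A has exactly $\gcd(d/2,n)$ such solutions. By inclusion-exclusion the number of indices $j$ with $\lambda_{j}=0$ equals $\gcd(d/2,n)+\gcd(d/2+1,n)-1$; and since $j=0$ lies in Case~A but in fact gives $\lambda_{0}=d\ne 0$, the nullity of $\mathbb{A}$ equals
\begin{equation*}
\gcd(d/2,n)+\gcd(d/2+1,n)-2.
\end{equation*}

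Because $d/2$ and $n$ are both even, $\gcd(d/2,n)\ge 2$, while $\gcd(d/2+1,n)\ge 1$ is trivial. Demanding nullity one therefore forces $\gcd(d/2,n)=2$ and $\gcd(d/2+1,n)=1$, and the identity $\gcd(2a,2b)=2\gcd(a,b)$ rewrites the first condition as $\gcd(d/4,n/2)=1$, giving the claimed characterisation. The main obstacle is spotting the algebraic factorisation that cleanly separates Cases~A and~B; afterwards the proof is a careful accounting of the shared solution $j=n/2$ and of the spurious Case-A solution $j=0$.
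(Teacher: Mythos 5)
Your proof is correct, but it follows a genuinely different route from the paper. You diagonalise the circulant via the DFT, factor the eigenvalue condition as $(\zeta^{d/2}-1)(\zeta^{d/2+1}+1)=0$, and count zero eigenvalues by inclusion--exclusion to get the exact nullity $\gcd(d/2,n)+\gcd(d/2+1,n)-2$; the characterisation then drops out since both-even forces $\gcd(d/2,n)\ge 2$. The paper instead works entirely over the reals with the kernel equations $\mathbb A\bold c=\bold 0$: for the ``if'' direction it subtracts consecutive row equations to produce telescoping chains of differences $c_0-c_t=c_{2(t+1)}-c_{t+2(t+1)}=\cdots$ whose indices sweep out all residues precisely when the two coprimality conditions hold, forcing $\bold c$ to be a multiple of $(1,-1,1,-1,\dots)$; for the ``only if'' direction it exhibits explicit kernel vectors with zero entries (supported on the coset of $t+1$, resp.\ on a periodic pattern of step $2k$) whenever a condition fails. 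Your approach is more uniform and yields strictly more information (a closed-form nullity for every even $n$, which in particular re-proves that the nullity is always at least $1$), and the factorisation cleanly explains where the two gcd conditions come from; the paper's approach is more elementary, avoids complex roots of unity, and makes the failure cases concrete. Two small points: your citation of Lemma~\ref{lem:eigenvector} at the outset would be circular (that lemma assumes the graph is already a nut graph), but this is harmless since you independently verify that $j=n/2$ satisfies both congruences, so $\lambda_{n/2}=0$ and $(1,-1,1,-1,\dots)^T$ is genuinely a kernel vector with no zero entries; and it is worth stating explicitly that for a circulant matrix the multiplicity of an eigenvalue equals the number of indices $j$ attaining it, which is what licenses reading the nullity off the count of solutions.
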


\begin{proof}
Let $t=\frac {d}{2}$.
Then $t$ is even and the graph is $G=C(n,\{1,2,\dots,t\})$.

Let $\mathbb A$ be the adjacency matrix of $G$.
By Lemma~{\ref{lem:eigenvector}}, $\bold b=(1,-1,1,-1,\dots)^T$ is an eigenvector
of $\mathbb A$ corresponding to eigenvalue $0$.
Thus $\mathbb A\bold b=\bold 0$.
Our aim is to show that if $t+1$ is coprime to $n$ and $\frac t2$
is coprime to $\frac n2$, then
$\mathbb A\bold c=\bold 0$ if and only if $\bold c$ is a multiple of
$\bold b$.

So let $\mathbb A\bold c=\bold 0$, where $\bold c=(c_0,c_1,\dots,c_{n-1})^T$.
Let $\mathbb A=(\bold a_0,\bold a_1,\dots,\bold a_{n-1})^T$. Then
\begin{align*}
\bold a_t\bold c &= c_0+c_1+\dots+c_{t-1}+c_{t+1}+c_{t+2}+\dots+c_{2t}=0,\\
\bold a_{t+1}\bold c &= c_1+c_2+\dots+c_t+c_{t+2}+c_{t+3}+\dots+c_{2t+1}=0.
\end{align*}
Subtracting the two equations we get
\begin{equation*}
\bold a_t\bold c-\bold a_{t+1}\bold c = c_0-c_t+c_{t+1}-c_{2t+1}=0,
\end{equation*}
and analogously
\begin{equation*}
\bold a_{2t+1}\bold c-\bold a_{2t+2}\bold c = c_{t+1}-c_{2t+1}+c_{2t+2}-c_{3t+2}=0.
\end{equation*}
This gives
\begin{equation*}
c_0-c_t=c_{2t+2}-c_{3t+2},
\end{equation*}
and analogously
\begin{align*}
c_{2t+2}-c_{3t+2} & =c_{4t+4}-c_{5t+4}, \\
c_{4t+4}-c_{5t+4} & =c_{6t+6}-c_{7t+5}, \qquad \text{etc.}
\end{align*}
So if the odd number $t+1$ is coprime to even number $n$,
we get
\begin{equation*}
c_0-c_t = c_{2(t+1)}-c_{t+2(t+1)}=
\cdots = c_2-c_{t+2},
\end{equation*}
which gives
\begin{equation*}
c_2-c_0=c_{t+2}-c_t,
\end{equation*}
and analogously we get
\begin{align*}
c_{t+2}-c_t & =c_{2t+2}-c_{2t}, \\
 c_{2t+2}-c_{2t} & =c_{3t+2}-c_{3t}, \qquad \text{etc.}
\end{align*}
Here, $t$ and $n$ are both even.
But if $\frac t2$ is coprime to $\frac n2$ then
\begin{equation*}
c_2-c_0=c_{t+2}-c_t=\dots=c_4-c_2.
\end{equation*}
Hence,
\begin{equation*}
c_2-c_0=c_4-c_2=c_6-c_4=\cdots
\end{equation*}
Now, if $c_2>c_0$ then $c_0<c_2<c_4<\dots <c_0$, a contradiction.
Analogously, if $c_2<c_0$ then $c_0>c_2>c_4>\dots >c_0$, a contradiction.
So $c_0=c_2=\dots=c_{n-2}$ and analogously $c_1=c_3=\dots=c_{n-1}$.
Hence if $c_0=p$, then $\bold c=(p,-p,p,-p,\dots)$ by Lemma~{\ref{lem:sum}},
and the eigenspace corresponding to eigenvalue $0$ is $1$-dimensional.

Now suppose that $t+1$ is not coprime to $n$.
Set $\bold b=\bold 0$.
We will change some entries of $\bold b$.
Since $t+1$ is odd, there is an even $k$ such that $(t+1)k\equiv 0\pmod n$
and $1\le k<n$.
Set
\begin{equation*}
b_0=1,\quad b_{t+1}=-1,\quad b_{2(t+1)}=1, \quad b_{3(t+1)}=-1,\quad \ldots,
\end{equation*}
where the indices are modulo $n$.
We have changed $k$ entries of $\bold b$ and since $k$ is even, the last
changed entry has value $-1$.
Thus some entries of $\bold b$ remained $0$'s and nevertheless
$\mathbb A\bold b=\bold 0$, since if $j$-th entry of $\bold a_i$ is
$1$, then either $(j+(t+1))$-th or $(j-(t+1))$-th (modulo $n$) entry
of $\bold a_i$ is also $1$ (while the other is $0$).
Hence, $G$ is not a nut graph in this case.

Finally, suppose that $\frac t2$ is not coprime to $\frac n2$.
Then there exist a number $k$ such that $k\mid\frac t2$, $k\mid\frac n2$ and $k>1$.
Again, set $\bold b=\bold 0$.
We will change some entries of $\bold b$.
Set
\begin{equation*}
b_0=b_2=b_4=\dots =b_{2(k-2)}=1 \text{\qquad and\qquad}
b_{2(k-1)}  =-(k-1),
\end{equation*}
and repeat this pattern for all even indices of $\bold b$.
Since $k\mid\frac n2$, this pattern is repeated exactly $\frac{n}{2k}$ times.
And since every $\bold a_i$ contains two disjoint sets of $t$ consecutive
$1$'s, we have $\mathbb A\bold b=\bold 0$. But half of the entries of
$\bold b$ are $0$'s and therefore $G$ is not a nut graph.
\end{proof}

Observe that the only requirement for $n$ in Theorem~{\ref{thm:d=0m4}} is
that $n$ is even and $n>d$.
However, if $n=d+2$ then $\frac d2+1$ is not coprime to $n$, and so $n\ge
d+4$.
Hence, by Theorem~{\ref{thm:d=0m4}}, for $d=12$ the following circulant
graphs are nut graphs: 
% \begin{equation}
\begin{align*}
 & C(16,\{1,2,3,4,5,6\}), & & C(20,\{1,2,3,4,5,6\}), & & C(22,\{1,2,3,4,5,6\}), \\
 & C(26,\{1,2,3,4,5,6\}), & & C(32,\{1,2,3,4,5,6\}), & & C(34,\{1,2,3,4,5,6\}), \text{ and} \\
 & C(38,\{1,2,3,4,5,6\}).
\end{align*}
% \end{equation}
Using computer \cite{SageMath} we found that nut graphs are also the following graphs:
%\begin{equation}
\begin{align*}
& C(18,\{1,2,3,4,5,8\}), & & C(24,\{1,2,3,4,5,8\}), & & C(28,\{1,2,3,4,5,10\}), \\
& C(30,\{1,2,3,4,5,8\}), \text{ and} & & C(36,\{1,2,3,4,5,8\}).
\end{align*}
%\end{equation}
We pose the following conjecture.

\begin{conjecture}
\label{conj:12}
For every even $n$, $n\ge 16$, there exist a circulant nut graph
$C(n,\{a_1,a_2,\dots,a_6\})$ of degree $12$.
\end{conjecture}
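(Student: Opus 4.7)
The plan is to cover all even $n \geq 16$ by a case analysis on the residue of $n$ modulo $42 = \operatorname{lcm}(6,14)$, combining Theorem~\ref{thm:d=0m4} with additional parametric families of jump sets.

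By Theorem~\ref{thm:d=0m4}, the graph $C(n,\{1,2,3,4,5,6\})$ is a $12$-regular nut graph whenever $n$ is even, $\gcd(7,n)=1$, and $\gcd(3,n/2)=1$, i.e., whenever $n$ is neither a multiple of $6$ nor of $14$. This already handles most residue classes modulo $42$, leaving only the even $n$ with $6 \mid n$ or $14 \mid n$ as the cases still to address.

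Guided by the computer-verified examples $C(18,\{1,2,3,4,5,8\})$, $C(24,\{1,2,3,4,5,8\})$, $C(30,\{1,2,3,4,5,8\})$, $C(36,\{1,2,3,4,5,8\})$, and $C(28,\{1,2,3,4,5,10\})$, I would attempt to prove that the families $\{1,2,3,4,5,8\}$ and $\{1,2,3,4,5,10\}$ (possibly supplemented by a small number of auxiliary jump sets) yield $12$-regular nut graphs for all sufficiently large $n$ in the remaining residue classes. The strategy closely mirrors the proof of Theorem~\ref{thm:d=0m4}: assume $\mathbb{A}\bold c=\bold 0$, form suitable differences $\bold a_i\bold c-\bold a_j\bold c$ to obtain short linear relations among the entries of $\bold c$, and then iterate these relations using the circulant shift $\varphi\colon i\mapsto i+1$ to conclude that $\bold c$ must be a scalar multiple of $(1,-1,1,-1,\ldots)^T$, which by Lemma~\ref{lem:eigenvector} is automatically in the kernel. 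The coprimality conditions on $n$ that emerge from this linear-algebra analysis should dovetail with the residue classes $6 \mid n$ and $14 \mid n$ still to be covered; any finitely many residues left over would be dispatched by a direct computer search, perhaps aided by an ad hoc family designed to handle the joint case $42 \mid n$.

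The main obstacle is that the proof of Theorem~\ref{thm:d=0m4} crucially exploits the consecutive structure of $\{1,2,\ldots,t\}$: the difference $\bold a_i-\bold a_{i+1}$ has only four nonzero entries, yielding a clean telescoping identity. For a jump set with a gap, such as $\{1,2,3,4,5,8\}$, the analogous row differences contain more nonzero entries and the resulting linear relations are longer, so the combinatorial bookkeeping needed to force $c_0=c_2=\cdots$ becomes considerably more intricate and the resulting coprimality hypotheses on $n$ are harder to predict in advance. A secondary difficulty is that Fowler's construction (Theorem~\ref{thm:construction}) does not in general preserve the circulant property, so one cannot simply bootstrap from a finite set of base graphs; every residue class modulo $42$ must be handled by a construction of its own.
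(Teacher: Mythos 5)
The statement you are addressing is Conjecture~\ref{conj:12}, which the paper explicitly leaves \emph{open}: it supplies only evidence, namely the infinite family from Theorem~\ref{thm:d=0m4} (covering all even $n$ with $\gcd(7,n)=1$ and $\gcd(3,n/2)=1$) plus five computer-verified sporadic examples with $n\le 36$. There is no proof in the paper to compare against, and your text is likewise not a proof but a research programme. Your reduction of the problem is correct as far as it goes --- Theorem~\ref{thm:d=0m4} does handle exactly the even $n$ with $6\nmid n$ and $14\nmid n$ --- but everything that remains is precisely the hard part, and it is left unestablished. You would need to prove, for \emph{every} $n$ in the infinite classes $6\mid n$ and $14\mid n$ (and their intersection $42\mid n$, for which the paper exhibits no example at all), that some explicit connection set yields a nut graph. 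You candidly note that the telescoping identity driving the proof of Theorem~\ref{thm:d=0m4} breaks down for non-consecutive jump sets such as $\{1,2,3,4,5,8\}$, and you do not supply a replacement; without it, the central claim of your plan is a conjecture resting on another conjecture.

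There is also a concrete logical slip in the fallback step: you propose that ``any finitely many residues left over would be dispatched by a direct computer search.'' A residue class modulo $42$ contains infinitely many even orders $n\ge 16$, so a finite computation cannot settle it; a computer search can only verify individual values of $n$, as the paper does for $n\in\{18,24,28,30,36\}$. Each surviving residue class needs its own infinite-family argument (for instance, a spectral computation with the eigenvalue formula $\sum_{k}2\cos(2\pi a_k j/n)$ for circulants, analysing when $j$ odd is the only root), and none is provided. So the proposal identifies the right target and a plausible line of attack, but it does not prove the statement, and indeed the statement remains unproven in the paper itself.
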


\noindent We also give a more general conjecture.

\begin{conjecture}
\label{conj:4|d}
For every $d$, where $d\equiv 0\pmod 4$, and for every even $n$, $n\ge d+4$,
there exists a circulant nut graph $C(n,\{a_1,a_2,\dots,a_{d/2}\})$ of degree $d$.
\end{conjecture}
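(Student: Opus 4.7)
The plan is to split the target even orders $n\ge d+4$ into a \emph{generic} class, handled directly by Theorem~\ref{thm:d=0m4}, and an \emph{exceptional} class, to be covered by perturbed connection sets. Throughout, the analysis is anchored by Lemma~\ref{lem:eigenvector}: in every circulant nut graph the kernel eigenvector is forced to be $(1,-1,1,-1,\dots)^T$, which translates the nut condition into a statement that the circulant eigenvalue $\lambda_{n/2}$ vanishes while every other $\lambda_k$ does not.

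First, Theorem~\ref{thm:d=0m4} already settles the conjecture for every even $n\ge d+4$ satisfying both $\gcd(d/2+1,n)=1$ and $\gcd(d/4,n/2)=1$. I would begin by listing the finitely many residue classes of $n$ modulo $2(d/2+1)$ and modulo $d/2$ for which at least one of these coprimality conditions fails; Theorem~\ref{thm:d=0m4} provides the desired nut graph off of this bad set, so the remaining task is to construct, uniformly in each bad residue class, an alternative circulant connection set $S\subset\{1,2,\dots,\lfloor n/2\rfloor\}$ of size $d/2$ such that $C(n,S)$ is a nut graph.

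Second, for the exceptional $n$ the natural approach (suggested by the small computer examples $C(18,\{1,\dots,5,8\})$, $C(28,\{1,\dots,5,10\})$, etc.) is to replace the largest element of the canonical set $\{1,2,\dots,d/2\}$ by some even integer $a=a(n)>d/2$. The replacement must preserve \emph{balanced parity} of $S$, i.e.\ equal numbers of odd and even entries, which is exactly the condition $\lambda_{n/2}=2\sum_{s\in S}(-1)^s=0$ needed by Lemma~\ref{lem:eigenvector}. One then uses the standard Fourier description
\begin{equation*}
\lambda_k \;=\; 2\sum_{s\in S}\cos(2\pi ks/n), \qquad k=0,1,\dots,n-1,
\end{equation*}
and has to verify that $\lambda_k\ne0$ for every $k\ne n/2$. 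The ``$1,2,\dots,d/2{-}1$'' portion of $S$ contributes a Dirichlet-kernel expression in closed form, so the task reduces to showing that the single perturbed term $2\cos(2\pi ka/n)$ cannot exactly cancel a Dirichlet kernel at any $k\ne n/2$; for a suitable choice of $a$ in each residue class this becomes a question about roots of specific cyclotomic/Chebyshev polynomials, tractable by Galois-theoretic arguments over $\mathbb{Q}(\zeta_n)$.

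The main obstacle will be \emph{uniformity in $d$}: the data for $d=12$ strongly suggests that a single exceptional template $\{1,2,\dots,d/2{-}1,a\}$ with $a$ chosen from a small list depending only on $n\bmod M$ (for some modulus $M=M(d)$) will work, but proving the non-vanishing of all side eigenvalues in full generality seems delicate, because the bad residue classes multiply as $d$ grows. A fallback strategy, if a single-parameter template proves insufficient, is to allow two perturbations $\{1,\dots,d/2{-}2,a,b\}$ and to run a compactness/pigeonhole argument over pairs $(a,b)$, reducing the problem to a finite verification per residue class of $n$ modulo $\operatorname{lcm}(d/2+1,d/2)$. In either case, handling Conjecture~\ref{conj:12} (the $d=12$ case) first, by treating the six residue classes of even $n$ modulo~$12$ one at a time, should serve as a useful template before attempting the general statement.
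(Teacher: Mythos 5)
This statement is Conjecture~\ref{conj:4|d}, which the paper explicitly leaves \emph{open}: the authors prove only the special case given by Theorem~\ref{thm:d=0m4} (the canonical connection set $\{1,2,\dots,d/2\}$ under two coprimality hypotheses) and record a handful of computer-found sporadic examples for $d=12$; they offer no proof of the general claim, and indeed even the $d=12$ instance is stated separately as Conjecture~\ref{conj:12}. Your submission is accordingly a research plan rather than a proof, and its first step --- invoking Theorem~\ref{thm:d=0m4} for the ``generic'' $n$ and correctly reformulating the nut condition as $\lambda_{n/2}=0$ together with $\lambda_k\ne 0$ for all $k\ne n/2$ --- is sound and matches the paper's framework. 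But everything after that is exactly the open problem restated.

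The genuine gap is the non-vanishing step. For the exceptional residue classes you propose connection sets $\{1,\dots,d/2-1,a\}$ and must show that $2\cos(2\pi ka/n)$ never cancels the Dirichlet-kernel contribution of $\{1,\dots,d/2-1\}$ for any $k\ne n/2$. Declaring this ``tractable by Galois-theoretic arguments over $\mathbb{Q}(\zeta_n)$'' is not an argument: deciding when an integer linear combination of roots of unity vanishes is a classically delicate problem, and no uniform criterion is supplied. Moreover, the proposed reduction to ``a finite verification per residue class'' does not actually finitize anything, since each residue class still contains infinitely many $n$, and the computational evidence in the paper (compare $C(28,\{1,\dots,5,10\})$ against $C(18,\{1,\dots,5,8\})$, $C(24,\{1,\dots,5,8\})$, $C(30,\{1,\dots,5,8\})$, $C(36,\{1,\dots,5,8\})$) already shows that the perturbation $a$ cannot be read off from a single small modulus in an obvious way. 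Until the side-eigenvalue non-vanishing is established for some explicit family of connection sets covering all exceptional even $n$, the conjecture remains unproved; your proposal identifies the right obstacle but does not overcome it.
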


By Theorem~{\ref{thm:necessary_vt}}, if $n$ is odd then there is no vertex-transitive nut
graph of order $n$ and degree $12$. In this case all graphs were found by a computer search.
If $G$ is a regular graph that contains edges $u_1v_1$ and $u_2v_2$ but does
not contain edges $u_1v_2,u_2v_1$, then {\em rewiring}
(i.e.\ removing edges $u_1v_1,u_2v_2$ and adding edges $u_1v_2,u_2v_1$) 
yields another regular graph.
Our approach was to start with a ``nice" $12$-regular graph of odd order
and perferm a sequence of rewirings.
In this way all graphs in the Appendix were obtained.
For instance, the graph on $21$ vertices, whose eigenvector contains 
only values $1$ and $-2$, was obtained from $C(21,\{1,2,3,4,5,6\})$ by
removing the edges $(0,16)$ and $(2,7)$ and adding the edges $(0,7)$ and
$(2,16)$.
For $n=13$ this method seems to be too time-consuming.

\vskip 1pc
\noindent{\bf Acknowledgements.}~The work of the first author is supported in part by the Slovenian Research Agency
(research program P1-0294 and research projects J1-9187, J1-1691, N1-0140 and J1-2481). The second author acknowledges
partial support by Slovak research grants APVV-15-0220, APVV-17-0428,
VEGA 1/0142/17 and VEGA 1/0238/19. The research of the third author was partially supported by the Slovenian Research Agency (ARRS),
research program P1-0383 and research project J1-1692.

\vskip 1pc
\noindent{\bf ORCID iD}

\noindent Nino Bašić \href{https://orcid.org/0000-0002-6555-8668}{\includegraphics[scale=0.05]{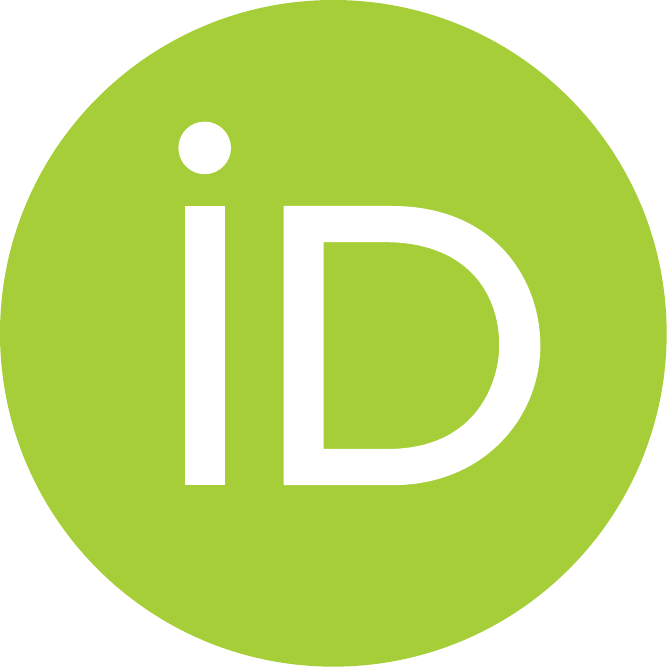} https://orcid.org/0000-0002-6555-8668}

\noindent Martin Knor \href{https://orcid.org/0000-0003-3555-3994}{\includegraphics[scale=0.05]{ORCID_icon.pdf} https://orcid.org/0000-0003-3555-3994}

\noindent Riste Škrekovski \href{https://orcid.org/0000-0001-6851-3214}{\includegraphics[scale=0.05]{ORCID_icon.pdf} https://orcid.org/0000-0001-6851-3214}

%%%%%%%%%%%%%%%%%%%%%%%%%%%%%%%%%%%%%%%%%%

%--------------------------------  APPENDIX ---------------------------

\newpage
\begin{appendices}

\section{$\boldsymbol{12}$-regular nut graphs of odd orders}

Here, we list one $12$-regular nut graph of odd order $n$ for each
$n\in \{17,19,\ldots,39\}$.
Each graph is given in the adjacency-lists (of neighbours of each vertex) representaion,
formatted as a Python dictionary.
We also give the corresponding kernel eigenvector $\bold c$ as a list of integer entries.

\paragraph{Order $\boldsymbol{n = 17}$.} ~\\
{\small \{0: [1, 2, 3, 4, 5, 8, 9, 10, 11, 12, 15, 16], 1: [0, 2, 3, 4, 6, 7, 8, 9, 10, 11, 15, 16], 2: [0, 1, 4, 5, 6, 7, 8, 9, 10, 11, 13, 15], 3: [0, 1, 4, 6, 7, 8, 9, 11, 12, 14, 15, 16], 4: [0, 1, 2, 3, 5, 6, 8, 9, 10, 11, 13, 16], 5: [0, 2, 4, 6, 7, 8, 9, 10, 12, 13, 14, 15], 6: [1, 2, 3, 4, 5, 7, 8, 9, 12, 13, 14, 15], 7: [1, 2, 3, 5, 6, 8, 10, 11, 12, 13, 14, 16], 8: [0, 1, 2, 3, 4, 5, 6, 7, 10, 11, 13, 14], 9: [0, 1, 2, 3, 4, 5, 6, 10, 12, 13, 14, 16], 10: [0, 1, 2, 4, 5, 7, 8, 9, 12, 14, 15, 16], 11: [0, 1, 2, 3, 4, 7, 8, 12, 13, 14, 15, 16], 12: [0, 3, 5, 6, 7, 9, 10, 11, 13, 14, 15, 16], 13: [2, 4, 5, 6, 7, 8, 9, 11, 12, 14, 15, 16], 14: [3, 5, 6, 7, 8, 9, 10, 11, 12, 13, 15, 16], 15: [0, 1, 2, 3, 5, 6, 10, 11, 12, 13, 14, 16], 16: [0, 1, 3, 4, 7, 9, 10, 11, 12, 13, 14, 15]\} }

\vspace{0.5\baselineskip}
\noindent $\bold c = {}$[$3$, $-3$, $-2$, $2$, $1$, $2$, $-1$, $-2$, $3$, $-1$, $-1$, $1$, $1$, $-1$, $1$, $-1$, $-2$]

\paragraph{Order $\boldsymbol{n = 19}$.} ~\\
{\small \{0: [1, 2, 5, 7, 9, 10, 11, 12, 13, 14, 16, 18], 1: [0, 3, 5, 6, 7, 10, 12, 13, 14, 15, 17, 18], 2: [0, 4, 6, 7, 8, 9, 10, 11, 12, 16, 17, 18], 3: [1, 6, 7, 8, 10, 11, 12, 13, 14, 16, 17, 18], 4: [2, 5, 6, 7, 8, 11, 12, 13, 14, 15, 17, 18], 5: [0, 1, 4, 7, 8, 9, 11, 12, 13, 14, 15, 17], 6: [1, 2, 3, 4, 7, 8, 9, 10, 14, 15, 16, 17], 7: [0, 1, 2, 3, 4, 5, 6, 8, 9, 11, 15, 16], 8: [2, 3, 4, 5, 6, 7, 9, 11, 14, 15, 17, 18], 9: [0, 2, 5, 6, 7, 8, 10, 11, 12, 13, 16, 17], 10: [0, 1, 2, 3, 6, 9, 11, 12, 13, 14, 16, 18], 11: [0, 2, 3, 4, 5, 7, 8, 9, 10, 16, 17, 18], 12: [0, 1, 2, 3, 4, 5, 9, 10, 13, 14, 15, 16], 13: [0, 1, 3, 4, 5, 9, 10, 12, 14, 15, 16, 17], 14: [0, 1, 3, 4, 5, 6, 8, 10, 12, 13, 15, 18], 15: [1, 4, 5, 6, 7, 8, 12, 13, 14, 16, 17, 18], 16: [0, 2, 3, 6, 7, 9, 10, 11, 12, 13, 15, 18], 17: [1, 2, 3, 4, 5, 6, 8, 9, 11, 13, 15, 18], 18: [0, 1, 2, 3, 4, 8, 10, 11, 14, 15, 16, 17]\} }

\vspace{0.5\baselineskip}
\noindent $\bold c = {}$[$5$, $10$, $6$, $-10$, $-3$, $-1$, $4$, $-1$, $-5$, $1$, $1$, $-5$, $-4$, $-3$, $-4$, $2$, $-4$, $7$, $4$]

\paragraph{Order $\boldsymbol{n = 21}$.} ~\\
{\small \{0: [1, 2, 3, 4, 5, 6, 7, 15, 17, 18, 19, 20], 1: [0, 2, 3, 4, 5, 6, 7, 16, 17, 18, 19, 20], 2: [0, 1, 3, 4, 5, 6, 8, 16, 17, 18, 19, 20], 3: [0, 1, 2, 4, 5, 6, 7, 8, 9, 18, 19, 20], 4: [0, 1, 2, 3, 5, 6, 7, 8, 9, 10, 19, 20], 5: [0, 1, 2, 3, 4, 6, 7, 8, 9, 10, 11, 20], 6: [0, 1, 2, 3, 4, 5, 7, 8, 9, 10, 11, 12], 7: [0, 1, 3, 4, 5, 6, 8, 9, 10, 11, 12, 13], 8: [2, 3, 4, 5, 6, 7, 9, 10, 11, 12, 13, 14], 9: [3, 4, 5, 6, 7, 8, 10, 11, 12, 13, 14, 15], 10: [4, 5, 6, 7, 8, 9, 11, 12, 13, 14, 15, 16], 11: [5, 6, 7, 8, 9, 10, 12, 13, 14, 15, 16, 17], 12: [6, 7, 8, 9, 10, 11, 13, 14, 15, 16, 17, 18], 13: [7, 8, 9, 10, 11, 12, 14, 15, 16, 17, 18, 19], 14: [8, 9, 10, 11, 12, 13, 15, 16, 17, 18, 19, 20], 15: [0, 9, 10, 11, 12, 13, 14, 16, 17, 18, 19, 20], 16: [1, 2, 10, 11, 12, 13, 14, 15, 17, 18, 19, 20], 17: [0, 1, 2, 11, 12, 13, 14, 15, 16, 18, 19, 20], 18: [0, 1, 2, 3, 12, 13, 14, 15, 16, 17, 19, 20], 19: [0, 1, 2, 3, 4, 13, 14, 15, 16, 17, 18, 20], 20: [0, 1, 2, 3, 4, 5, 14, 15, 16, 17, 18, 19]\} }

\vspace{0.5\baselineskip}
\noindent $\bold c = {}$[$1$, $-2$, $1$, $1$, $-2$, $1$, $1$, $-2$, $1$, $1$, $-2$, $1$, $1$, $-2$, $1$, $1$, $-2$, $1$, $1$, $-2$, $1$]

\paragraph{Order $\boldsymbol{n = 23}$.} ~\\
{\small \{0: [1, 2, 4, 6, 7, 8, 10, 11, 13, 19, 20, 21], 1: [0, 4, 5, 6, 7, 9, 11, 13, 16, 17, 20, 22], 2: [0, 3, 4, 6, 8, 11, 12, 13, 16, 19, 20, 21], 3: [2, 4, 5, 8, 9, 10, 12, 13, 14, 16, 17, 18], 4: [0, 1, 2, 3, 6, 7, 8, 14, 15, 16, 21, 22], 5: [1, 3, 7, 10, 11, 12, 14, 15, 17, 18, 19, 20], 6: [0, 1, 2, 4, 11, 12, 14, 17, 18, 19, 20, 22], 7: [0, 1, 4, 5, 10, 11, 12, 16, 18, 19, 21, 22], 8: [0, 2, 3, 4, 9, 10, 12, 13, 15, 16, 21, 22], 9: [1, 3, 8, 10, 11, 13, 14, 15, 18, 19, 21, 22], 10: [0, 3, 5, 7, 8, 9, 11, 13, 17, 18, 19, 21], 11: [0, 1, 2, 5, 6, 7, 9, 10, 13, 14, 15, 20], 12: [2, 3, 5, 6, 7, 8, 13, 14, 17, 19, 20, 22], 13: [0, 1, 2, 3, 8, 9, 10, 11, 12, 14, 15, 19], 14: [3, 4, 5, 6, 9, 11, 12, 13, 15, 16, 17, 20], 15: [4, 5, 8, 9, 11, 13, 14, 17, 18, 20, 21, 22], 16: [1, 2, 3, 4, 7, 8, 14, 17, 18, 20, 21, 22], 17: [1, 3, 5, 6, 10, 12, 14, 15, 16, 19, 20, 21], 18: [3, 5, 6, 7, 9, 10, 15, 16, 19, 20, 21, 22], 19: [0, 2, 5, 6, 7, 9, 10, 12, 13, 17, 18, 22], 20: [0, 1, 2, 5, 6, 11, 12, 14, 15, 16, 17, 18], 21: [0, 2, 4, 7, 8, 9, 10, 15, 16, 17, 18, 22], 22: [1, 4, 6, 7, 8, 9, 12, 15, 16, 18, 19, 21]\} }

\vspace{0.5\baselineskip}
\noindent $\bold c = {}$[$6$, $-24$, $-7$, $13$, $39$, $1$, $27$, $4$, $-18$, $-4$, $10$, $3$, $-14$, $-14$, $28$, $1$, $-22$, $-2$, $3$, $6$, $-28$, $2$, $-10$]

\paragraph{Order $\boldsymbol{n = 25}$.} ~\\
{\small \{0: [3, 4, 5, 7, 9, 10, 12, 13, 17, 19, 22, 23], 1: [2, 3, 5, 11, 12, 15, 16, 18, 19, 20, 21, 23], 2: [1, 3, 4, 5, 10, 13, 14, 17, 20, 21, 23, 24], 3: [0, 1, 2, 5, 8, 10, 14, 16, 20, 21, 23, 24], 4: [0, 2, 6, 8, 9, 10, 11, 13, 18, 21, 23, 24], 5: [0, 1, 2, 3, 10, 13, 14, 17, 18, 19, 20, 24], 6: [4, 8, 9, 10, 11, 12, 14, 17, 19, 20, 21, 22], 7: [0, 8, 9, 11, 12, 15, 16, 18, 19, 22, 23, 24], 8: [3, 4, 6, 7, 9, 10, 11, 13, 17, 18, 22, 23], 9: [0, 4, 6, 7, 8, 10, 11, 12, 14, 15, 18, 21], 10: [0, 2, 3, 4, 5, 6, 8, 9, 15, 16, 17, 18], 11: [1, 4, 6, 7, 8, 9, 12, 13, 14, 17, 19, 20], 12: [0, 1, 6, 7, 9, 11, 13, 14, 15, 18, 21, 22], 13: [0, 2, 4, 5, 8, 11, 12, 16, 20, 21, 22, 23], 14: [2, 3, 5, 6, 9, 11, 12, 15, 16, 17, 19, 22], 15: [1, 7, 9, 10, 12, 14, 16, 17, 19, 20, 22, 24], 16: [1, 3, 7, 10, 13, 14, 15, 17, 18, 19, 20, 24], 17: [0, 2, 5, 6, 8, 10, 11, 14, 15, 16, 21, 23], 18: [1, 4, 5, 7, 8, 9, 10, 12, 16, 21, 22, 24], 19: [0, 1, 5, 6, 7, 11, 14, 15, 16, 21, 22, 24], 20: [1, 2, 3, 5, 6, 11, 13, 15, 16, 22, 23, 24], 21: [1, 2, 3, 4, 6, 9, 12, 13, 17, 18, 19, 23], 22: [0, 6, 7, 8, 12, 13, 14, 15, 18, 19, 20, 24], 23: [0, 1, 2, 3, 4, 7, 8, 13, 17, 20, 21, 24], 24: [2, 3, 4, 5, 7, 15, 16, 18, 19, 20, 22, 23]\} }

\vspace{0.5\baselineskip}
\noindent $\bold c = {}$[$29$, $20$, $-31$, $7$, $5$, $-13$, $32$, $-19$, $-12$, $1$, $31$, $-12$, $-8$, $-6$, $-49$, $17$, $3$, $-17$, $-21$, $20$, $33$, $7$, $1$, $-2$, $-16$]

\paragraph{Order $\boldsymbol{n = 27}$.} ~\\
{\small \{0: [2, 3, 4, 5, 6, 7, 21, 22, 23, 24, 25, 26], 1: [2, 3, 4, 5, 6, 7, 8, 22, 23, 24, 25, 26], 2: [0, 1, 3, 4, 5, 6, 7, 8, 23, 24, 25, 26], 3: [0, 1, 2, 4, 5, 6, 7, 8, 9, 24, 25, 26], 4: [0, 1, 2, 3, 5, 6, 7, 8, 9, 10, 25, 26], 5: [0, 1, 2, 3, 4, 6, 7, 8, 9, 10, 11, 26], 6: [0, 1, 2, 3, 4, 5, 7, 8, 9, 10, 11, 12], 7: [0, 1, 2, 3, 4, 5, 6, 9, 10, 11, 12, 13], 8: [1, 2, 3, 4, 5, 6, 9, 10, 11, 12, 13, 14], 9: [3, 4, 5, 6, 7, 8, 10, 11, 12, 13, 14, 15], 10: [4, 5, 6, 7, 8, 9, 11, 12, 13, 14, 15, 16], 11: [5, 6, 7, 8, 9, 10, 12, 13, 14, 15, 16, 17], 12: [6, 7, 8, 9, 10, 11, 13, 14, 15, 16, 17, 18], 13: [7, 8, 9, 10, 11, 12, 14, 15, 16, 17, 18, 19], 14: [8, 9, 10, 11, 12, 13, 15, 16, 17, 18, 19, 20], 15: [9, 10, 11, 12, 13, 14, 16, 17, 18, 19, 20, 21], 16: [10, 11, 12, 13, 14, 15, 17, 18, 19, 20, 21, 22], 17: [11, 12, 13, 14, 15, 16, 18, 19, 20, 21, 22, 23], 18: [12, 13, 14, 15, 16, 17, 19, 20, 21, 22, 23, 24], 19: [13, 14, 15, 16, 17, 18, 20, 21, 22, 23, 24, 25], 20: [14, 15, 16, 17, 18, 19, 21, 22, 23, 24, 25, 26], 21: [0, 15, 16, 17, 18, 19, 20, 22, 23, 24, 25, 26], 22: [0, 1, 16, 17, 18, 19, 20, 21, 23, 24, 25, 26], 23: [0, 1, 2, 17, 18, 19, 20, 21, 22, 24, 25, 26], 24: [0, 1, 2, 3, 18, 19, 20, 21, 22, 23, 25, 26], 25: [0, 1, 2, 3, 4, 19, 20, 21, 22, 23, 24, 26], 26: [0, 1, 2, 3, 4, 5, 20, 21, 22, 23, 24, 25]\} }

\vspace{0.5\baselineskip}
\noindent $\bold c = {}$[$1$, $-2$, $1$, $1$, $-2$, $1$, $1$, $-2$, $1$, $1$, $-2$, $1$, $1$, $-2$, $1$, $1$, $-2$, $1$, $1$, $-2$, $1$, $1$, $-2$, $1$, $1$, $-2$, $1$]

\paragraph{Order $\boldsymbol{n = 29}$.} ~\\
{\small \{0: [1, 3, 5, 6, 9, 10, 11, 13, 14, 19, 26, 28], 1: [0, 2, 4, 5, 11, 16, 17, 18, 19, 21, 26, 27], 2: [1, 3, 8, 9, 10, 11, 13, 24, 25, 26, 27, 28], 3: [0, 2, 12, 13, 17, 20, 21, 23, 24, 25, 26, 27], 4: [1, 5, 6, 9, 11, 15, 16, 17, 20, 22, 23, 28], 5: [0, 1, 4, 7, 12, 15, 16, 19, 20, 22, 24, 25], 6: [0, 4, 7, 8, 9, 11, 15, 17, 18, 19, 21, 22], 7: [5, 6, 8, 11, 12, 13, 15, 16, 18, 20, 22, 24], 8: [2, 6, 7, 10, 12, 15, 19, 20, 21, 24, 26, 27], 9: [0, 2, 4, 6, 12, 14, 15, 20, 22, 23, 24, 27], 10: [0, 2, 8, 13, 16, 17, 18, 20, 21, 23, 25, 26], 11: [0, 1, 2, 4, 6, 7, 12, 16, 17, 19, 20, 23], 12: [3, 5, 7, 8, 9, 11, 14, 15, 18, 19, 21, 25], 13: [0, 2, 3, 7, 10, 14, 15, 21, 23, 25, 27, 28], 14: [0, 9, 12, 13, 15, 18, 22, 23, 24, 26, 27, 28], 15: [4, 5, 6, 7, 8, 9, 12, 13, 14, 18, 22, 27], 16: [1, 4, 5, 7, 10, 11, 18, 20, 21, 25, 27, 28], 17: [1, 3, 4, 6, 10, 11, 18, 19, 22, 24, 27, 28], 18: [1, 6, 7, 10, 12, 14, 15, 16, 17, 19, 23, 24], 19: [0, 1, 5, 6, 8, 11, 12, 17, 18, 23, 26, 27], 20: [3, 4, 5, 7, 8, 9, 10, 11, 16, 25, 26, 28], 21: [1, 3, 6, 8, 10, 12, 13, 16, 22, 23, 25, 26], 22: [4, 5, 6, 7, 9, 14, 15, 17, 21, 24, 25, 27], 23: [3, 4, 9, 10, 11, 13, 14, 18, 19, 21, 24, 28], 24: [2, 3, 5, 7, 8, 9, 14, 17, 18, 22, 23, 28], 25: [2, 3, 5, 10, 12, 13, 16, 20, 21, 22, 26, 28], 26: [0, 1, 2, 3, 8, 10, 14, 19, 20, 21, 25, 28], 27: [1, 2, 3, 8, 9, 13, 14, 15, 16, 17, 19, 22], 28: [0, 2, 4, 13, 14, 16, 17, 20, 23, 24, 25, 26]\} }

\vspace{0.5\baselineskip}
\noindent $\bold c = {}$[$1$, $1$, $37$, $-13$, $-20$, $-42$, $21$, $-5$, $-36$, $25$, $5$, $30$, $41$, $-25$, $21$, $-6$, $6$, $17$, $34$, $-34$, $-14$, $-13$, $7$, $-51$, $-16$, $39$, $5$, $-21$, $6$]

\paragraph{Order $\boldsymbol{n = 31}$.} ~\\
{\small \{0: [5, 10, 12, 13, 17, 18, 21, 22, 24, 26, 27, 29], 1: [3, 6, 7, 8, 10, 14, 17, 20, 23, 25, 27, 30], 2: [4, 7, 9, 10, 18, 21, 22, 23, 24, 25, 27, 28], 3: [1, 4, 5, 11, 13, 16, 17, 18, 19, 24, 25, 29], 4: [2, 3, 5, 11, 12, 13, 18, 21, 25, 26, 28, 29], 5: [0, 3, 4, 6, 7, 9, 11, 14, 17, 25, 27, 29], 6: [1, 5, 8, 9, 11, 13, 18, 20, 22, 26, 29, 30], 7: [1, 2, 5, 9, 10, 12, 20, 24, 25, 26, 27, 30], 8: [1, 6, 9, 14, 15, 17, 18, 20, 21, 22, 23, 30], 9: [2, 5, 6, 7, 8, 12, 14, 15, 19, 24, 27, 28], 10: [0, 1, 2, 7, 12, 13, 15, 18, 19, 21, 24, 28], 11: [3, 4, 5, 6, 12, 15, 17, 20, 22, 23, 29, 30], 12: [0, 4, 7, 9, 10, 11, 14, 16, 18, 21, 27, 30], 13: [0, 3, 4, 6, 10, 16, 20, 23, 24, 25, 26, 27], 14: [1, 5, 8, 9, 12, 15, 17, 18, 19, 20, 22, 23], 15: [8, 9, 10, 11, 14, 17, 19, 20, 21, 27, 28, 30], 16: [3, 12, 13, 18, 19, 21, 22, 23, 24, 26, 28, 29], 17: [0, 1, 3, 5, 8, 11, 14, 15, 20, 22, 23, 29], 18: [0, 2, 3, 4, 6, 8, 10, 12, 14, 16, 24, 25], 19: [3, 9, 10, 14, 15, 16, 20, 21, 22, 23, 26, 28], 20: [1, 6, 7, 8, 11, 13, 14, 15, 17, 19, 24, 25], 21: [0, 2, 4, 8, 10, 12, 15, 16, 19, 25, 27, 29], 22: [0, 2, 6, 8, 11, 14, 16, 17, 19, 23, 28, 30], 23: [1, 2, 8, 11, 13, 14, 16, 17, 19, 22, 26, 28], 24: [0, 2, 3, 7, 9, 10, 13, 16, 18, 20, 28, 30], 25: [1, 2, 3, 4, 5, 7, 13, 18, 20, 21, 26, 29], 26: [0, 4, 6, 7, 13, 16, 19, 23, 25, 27, 29, 30], 27: [0, 1, 2, 5, 7, 9, 12, 13, 15, 21, 26, 30], 28: [2, 4, 9, 10, 15, 16, 19, 22, 23, 24, 29, 30], 29: [0, 3, 4, 5, 6, 11, 16, 17, 21, 25, 26, 28], 30: [1, 6, 7, 8, 11, 12, 15, 22, 24, 26, 27, 28]\} }

\vspace{0.5\baselineskip}
\noindent $\bold c = {}$[$1$, $91$, $-39$, $14$, $39$, $33$, $75$, $-48$, $-37$, $2$, $146$, $-14$, $-13$, $23$, $20$, $6$, $-84$, $-32$, $27$, $38$, $-93$, $-66$, $-43$, $21$, $-79$, $-43$, $18$, $-15$, $59$, $1$, $-8$]

\paragraph{Order $\boldsymbol{n = 33}$.} ~\\
{\small \{0: [1, 2, 3, 4, 5, 6, 27, 28, 29, 30, 31, 32], 1: [0, 2, 3, 4, 5, 6, 7, 11, 28, 29, 31, 32], 2: [0, 1, 3, 4, 5, 6, 7, 8, 29, 30, 31, 32], 3: [0, 1, 2, 4, 5, 6, 7, 8, 9, 30, 31, 32], 4: [0, 1, 2, 3, 5, 6, 7, 8, 9, 10, 31, 32], 5: [0, 1, 2, 3, 4, 6, 7, 8, 9, 10, 11, 32], 6: [0, 1, 2, 3, 4, 5, 7, 8, 9, 10, 11, 12], 7: [1, 2, 3, 4, 5, 6, 8, 9, 10, 11, 12, 13], 8: [2, 3, 4, 5, 6, 7, 9, 10, 11, 12, 13, 14], 9: [3, 4, 5, 6, 7, 8, 10, 11, 12, 13, 14, 15], 10: [4, 5, 6, 7, 8, 9, 12, 13, 14, 15, 16, 30], 11: [1, 5, 6, 7, 8, 9, 12, 13, 14, 15, 16, 17], 12: [6, 7, 8, 9, 10, 11, 13, 14, 15, 16, 17, 18], 13: [7, 8, 9, 10, 11, 12, 14, 15, 16, 17, 18, 19], 14: [8, 9, 10, 11, 12, 13, 15, 16, 17, 18, 19, 20], 15: [9, 10, 11, 12, 13, 14, 16, 17, 18, 19, 20, 21], 16: [10, 11, 12, 13, 14, 15, 17, 18, 19, 20, 21, 22], 17: [11, 12, 13, 14, 15, 16, 18, 19, 20, 21, 22, 23], 18: [12, 13, 14, 15, 16, 17, 19, 20, 21, 22, 23, 24], 19: [13, 14, 15, 16, 17, 18, 20, 21, 22, 23, 24, 25], 20: [14, 15, 16, 17, 18, 19, 21, 22, 23, 24, 25, 26], 21: [15, 16, 17, 18, 19, 20, 22, 23, 24, 25, 26, 27], 22: [16, 17, 18, 19, 20, 21, 23, 24, 25, 26, 27, 28], 23: [17, 18, 19, 20, 21, 22, 24, 25, 26, 27, 28, 29], 24: [18, 19, 20, 21, 22, 23, 25, 26, 27, 28, 29, 30], 25: [19, 20, 21, 22, 23, 24, 26, 27, 28, 29, 30, 31], 26: [20, 21, 22, 23, 24, 25, 27, 28, 29, 30, 31, 32], 27: [0, 21, 22, 23, 24, 25, 26, 28, 29, 30, 31, 32], 28: [0, 1, 22, 23, 24, 25, 26, 27, 29, 30, 31, 32], 29: [0, 1, 2, 23, 24, 25, 26, 27, 28, 30, 31, 32], 30: [0, 2, 3, 10, 24, 25, 26, 27, 28, 29, 31, 32], 31: [0, 1, 2, 3, 4, 25, 26, 27, 28, 29, 30, 32], 32: [0, 1, 2, 3, 4, 5, 26, 27, 28, 29, 30, 31]\} }

\vspace{0.5\baselineskip}
\noindent $\bold c = {}$[$1$, $-2$, $1$, $1$, $-2$, $1$, $1$, $-2$, $1$, $1$, $-2$, $1$, $1$, $-2$, $1$, $1$, $-2$, $1$, $1$, $-2$, $1$, $1$, $-2$, $1$, $1$, $-2$, $1$, $1$, $-2$, $1$, $1$, $-2$, $1$]

\paragraph{Order $\boldsymbol{n = 35}$.} ~\\
{\small \{0: [1, 2, 3, 4, 5, 6, 29, 30, 31, 32, 33, 34], 1: [0, 2, 3, 4, 5, 6, 7, 30, 31, 32, 33, 34], 2: [0, 1, 3, 4, 5, 6, 7, 8, 15, 31, 32, 33], 3: [0, 1, 2, 4, 5, 6, 8, 9, 15, 32, 33, 34], 4: [0, 1, 2, 3, 5, 6, 7, 8, 9, 31, 33, 34], 5: [0, 1, 2, 3, 4, 6, 7, 8, 9, 10, 11, 34], 6: [0, 1, 2, 3, 4, 5, 7, 8, 9, 10, 11, 12], 7: [1, 2, 4, 5, 6, 8, 9, 10, 11, 12, 13, 21], 8: [2, 3, 4, 5, 6, 7, 9, 10, 11, 12, 13, 14], 9: [3, 4, 5, 6, 7, 8, 10, 11, 12, 13, 14, 15], 10: [5, 6, 7, 8, 9, 11, 12, 13, 14, 15, 16, 25], 11: [5, 6, 7, 8, 9, 10, 12, 13, 14, 15, 16, 17], 12: [6, 7, 8, 9, 10, 11, 13, 14, 15, 16, 17, 18], 13: [7, 8, 9, 10, 11, 12, 14, 16, 17, 18, 19, 34], 14: [8, 9, 10, 11, 12, 13, 15, 16, 17, 18, 19, 20], 15: [2, 3, 9, 10, 11, 12, 14, 16, 17, 18, 19, 20], 16: [10, 11, 12, 13, 14, 15, 17, 18, 19, 20, 21, 22], 17: [11, 12, 13, 14, 15, 16, 18, 19, 20, 21, 22, 23], 18: [12, 13, 14, 15, 16, 17, 19, 20, 21, 22, 23, 24], 19: [13, 14, 15, 16, 17, 18, 20, 21, 22, 23, 24, 25], 20: [14, 15, 16, 17, 18, 19, 21, 22, 23, 24, 25, 26], 21: [7, 16, 17, 18, 19, 20, 22, 23, 24, 25, 26, 27], 22: [16, 17, 18, 19, 20, 21, 23, 24, 25, 26, 27, 28], 23: [17, 18, 19, 20, 21, 22, 24, 25, 26, 27, 28, 29], 24: [18, 19, 20, 21, 22, 23, 25, 26, 27, 28, 29, 30], 25: [10, 19, 20, 21, 22, 23, 24, 26, 27, 28, 29, 30], 26: [20, 21, 22, 23, 24, 25, 27, 28, 29, 30, 31, 32], 27: [21, 22, 23, 24, 25, 26, 28, 29, 30, 31, 32, 33], 28: [22, 23, 24, 25, 26, 27, 29, 30, 31, 32, 33, 34], 29: [0, 23, 24, 25, 26, 27, 28, 30, 31, 32, 33, 34], 30: [0, 1, 24, 25, 26, 27, 28, 29, 31, 32, 33, 34], 31: [0, 1, 2, 4, 26, 27, 28, 29, 30, 32, 33, 34], 32: [0, 1, 2, 3, 26, 27, 28, 29, 30, 31, 33, 34], 33: [0, 1, 2, 3, 4, 27, 28, 29, 30, 31, 32, 34], 34: [0, 1, 3, 4, 5, 13, 28, 29, 30, 31, 32, 33]\} }

\vspace{0.5\baselineskip}
\noindent $\bold c = {}$[$1$, $-1$, $-1$, $-3$, $3$, $2$, $-1$, $-1$, $1$, $1$, $-2$, $2$, $-2$, $-1$, $3$, $-1$, $-1$, $2$, $-2$, $-2$, $5$, $-1$, $-1$, $1$, $-2$, $-2$, $6$, $-3$, $-1$, $1$, $-1$, $5$, $-1$, $-4$, $1$]

\paragraph{Order $\boldsymbol{n = 37}$.} ~\\
{\small \{0: [1, 2, 3, 4, 5, 6, 31, 32, 33, 34, 35, 36], 1: [0, 2, 3, 4, 5, 6, 7, 18, 22, 32, 33, 35], 2: [0, 1, 3, 4, 5, 6, 7, 8, 33, 34, 35, 36], 3: [0, 1, 2, 4, 5, 6, 7, 8, 9, 34, 35, 36], 4: [0, 1, 2, 3, 5, 6, 7, 8, 9, 10, 35, 36], 5: [0, 1, 2, 3, 4, 6, 7, 8, 9, 10, 11, 36], 6: [0, 1, 2, 3, 4, 5, 7, 8, 9, 10, 11, 12], 7: [1, 2, 3, 4, 5, 6, 8, 9, 10, 11, 12, 13], 8: [2, 3, 4, 5, 6, 7, 9, 10, 11, 12, 13, 14], 9: [3, 4, 5, 6, 7, 8, 10, 11, 12, 13, 15, 32], 10: [4, 5, 6, 7, 8, 9, 11, 12, 13, 14, 15, 16], 11: [5, 6, 7, 8, 9, 10, 12, 13, 14, 15, 16, 17], 12: [6, 7, 8, 9, 10, 11, 13, 14, 15, 16, 17, 18], 13: [7, 8, 9, 10, 11, 12, 14, 15, 16, 17, 19, 36], 14: [8, 10, 11, 12, 13, 15, 16, 17, 18, 19, 20, 35], 15: [9, 10, 11, 12, 13, 14, 16, 17, 18, 19, 20, 21], 16: [10, 11, 12, 13, 14, 15, 17, 18, 19, 20, 21, 22], 17: [11, 12, 13, 14, 15, 16, 18, 19, 20, 21, 22, 23], 18: [1, 12, 14, 15, 16, 17, 19, 20, 21, 22, 23, 24], 19: [13, 14, 15, 16, 17, 18, 20, 21, 22, 23, 24, 25], 20: [14, 15, 16, 17, 18, 19, 21, 22, 23, 24, 25, 26], 21: [15, 16, 17, 18, 19, 20, 22, 23, 24, 25, 26, 27], 22: [1, 16, 17, 18, 19, 20, 21, 23, 24, 26, 27, 28], 23: [17, 18, 19, 20, 21, 22, 24, 25, 26, 27, 28, 29], 24: [18, 19, 20, 21, 22, 23, 25, 26, 27, 28, 29, 30], 25: [19, 20, 21, 23, 24, 26, 27, 28, 29, 30, 31, 34], 26: [20, 21, 22, 23, 24, 25, 27, 28, 29, 30, 31, 32], 27: [21, 22, 23, 24, 25, 26, 28, 29, 30, 31, 32, 33], 28: [22, 23, 24, 25, 26, 27, 29, 30, 31, 32, 33, 34], 29: [23, 24, 25, 26, 27, 28, 30, 31, 32, 33, 34, 35], 30: [24, 25, 26, 27, 28, 29, 31, 32, 33, 34, 35, 36], 31: [0, 25, 26, 27, 28, 29, 30, 32, 33, 34, 35, 36], 32: [0, 1, 9, 26, 27, 28, 29, 30, 31, 33, 34, 36], 33: [0, 1, 2, 27, 28, 29, 30, 31, 32, 34, 35, 36], 34: [0, 2, 3, 25, 28, 29, 30, 31, 32, 33, 35, 36], 35: [0, 1, 2, 3, 4, 14, 29, 30, 31, 33, 34, 36], 36: [0, 2, 3, 4, 5, 13, 30, 31, 32, 33, 34, 35]\} }

\vspace{0.5\baselineskip}
\noindent $\bold c = {}$[$2$, $-3$, $-4$, $5$, $1$, $-1$, $-1$, $-4$, $5$, $2$, $-5$, $1$, $1$, $-1$, $6$, $-5$, $-4$, $7$, $-1$, $-5$, $4$, $-5$, $3$, $6$, $-5$, $-5$, $8$, $-3$, $1$, $1$, $-4$, $3$, $4$, $-7$, $-1$, $3$, $1$]

\paragraph{Order $\boldsymbol{n = 39}$.} ~\\
{\small \{0: [1, 2, 3, 4, 5, 6, 15, 33, 34, 36, 37, 38], 1: [0, 2, 3, 4, 5, 6, 7, 34, 35, 36, 37, 38], 2: [0, 1, 3, 4, 5, 6, 7, 8, 35, 36, 37, 38], 3: [0, 1, 2, 4, 5, 6, 7, 8, 9, 36, 37, 38], 4: [0, 1, 2, 3, 5, 6, 7, 8, 9, 10, 37, 38], 5: [0, 1, 2, 3, 4, 6, 7, 8, 9, 10, 11, 38], 6: [0, 1, 2, 3, 4, 5, 7, 8, 9, 10, 11, 12], 7: [1, 2, 3, 4, 5, 6, 8, 9, 10, 11, 12, 13], 8: [2, 3, 4, 5, 6, 7, 9, 10, 11, 12, 13, 14], 9: [3, 4, 5, 6, 7, 8, 10, 11, 12, 13, 14, 15], 10: [4, 5, 6, 7, 8, 9, 11, 12, 13, 14, 15, 16], 11: [5, 6, 7, 8, 9, 10, 12, 13, 14, 16, 17, 35], 12: [6, 7, 8, 9, 10, 11, 13, 14, 15, 16, 17, 18], 13: [7, 8, 9, 10, 11, 12, 14, 15, 16, 17, 18, 19], 14: [8, 9, 10, 11, 12, 13, 15, 16, 17, 18, 19, 20], 15: [0, 9, 10, 12, 13, 14, 16, 17, 18, 19, 20, 21], 16: [10, 11, 12, 13, 14, 15, 17, 18, 19, 20, 21, 22], 17: [11, 12, 13, 14, 15, 16, 18, 19, 20, 21, 22, 23], 18: [12, 13, 14, 15, 16, 17, 19, 20, 21, 22, 23, 24], 19: [13, 14, 15, 16, 17, 18, 20, 21, 22, 23, 24, 25], 20: [14, 15, 16, 17, 18, 19, 21, 22, 23, 24, 25, 26], 21: [15, 16, 17, 18, 19, 20, 22, 23, 24, 25, 26, 27], 22: [16, 17, 18, 19, 20, 21, 23, 24, 25, 26, 27, 28], 23: [17, 18, 19, 20, 21, 22, 24, 25, 26, 27, 28, 29], 24: [18, 19, 20, 21, 22, 23, 25, 26, 27, 28, 29, 30], 25: [19, 20, 21, 22, 23, 24, 26, 27, 28, 29, 30, 31], 26: [20, 21, 22, 23, 24, 25, 27, 28, 29, 30, 31, 32], 27: [21, 22, 23, 24, 25, 26, 28, 29, 30, 31, 32, 33], 28: [22, 23, 24, 25, 26, 27, 29, 30, 31, 32, 33, 34], 29: [23, 24, 25, 26, 27, 28, 30, 31, 32, 33, 34, 35], 30: [24, 25, 26, 27, 28, 29, 31, 32, 33, 34, 35, 36], 31: [25, 26, 27, 28, 29, 30, 32, 33, 34, 35, 36, 37], 32: [26, 27, 28, 29, 30, 31, 33, 34, 35, 36, 37, 38], 33: [0, 27, 28, 29, 30, 31, 32, 34, 35, 36, 37, 38], 34: [0, 1, 28, 29, 30, 31, 32, 33, 35, 36, 37, 38], 35: [1, 2, 11, 29, 30, 31, 32, 33, 34, 36, 37, 38], 36: [0, 1, 2, 3, 30, 31, 32, 33, 34, 35, 37, 38], 37: [0, 1, 2, 3, 4, 31, 32, 33, 34, 35, 36, 38], 38: [0, 1, 2, 3, 4, 5, 32, 33, 34, 35, 36, 37]\} }

\vspace{0.5\baselineskip}
\noindent $\bold c = {}$[$1$, $-2$, $1$, $1$, $-2$, $1$, $1$, $-2$, $1$, $1$, $-2$, $1$, $1$, $-2$, $1$, $1$, $-2$, $1$, $1$, $-2$, $1$, $1$, $-2$, $1$, $1$, $-2$, $1$, $1$, $-2$, $1$, $1$, $-2$, $1$, $1$, $-2$, $1$, $1$, $-2$, $1$]

\end{appendices}

\end{document}